\newtheorem{Theorem}{Theorem}[section]
\newtheorem{Corollary}[Theorem]{Corollary}
\newtheorem{Proposition}[Theorem]{Proposition}
\newtheorem{Remark}[Theorem]{Remark}
\newtheorem{Definition}[Theorem]{Definition}
\begin{document}
\title{ON SOME AUTOMORPHISM RELATED PARAMETERS IN GRAPHS}
\author{Imran Javaid$^*$, Hira Benish, Usman Ali, M. Murtaza}
\keywords{fixing number, fixing share, fixing percentage. \\
\indent 2010 {\it Mathematics Subject Classification.} 05C25\\
\indent $^*$ Corresponding author: imran.javaid@bzu.edu.pk}
\address{Centre for Advanced Studies in Pure and Applied Mathematics,
Bahauddin Zakariya University Multan, Pakistan\newline Email:
imran.javaid@bzu.edu.pk, hira\_benish@yahoo.com,
uali@bzu.edu.pk,\newline mahru830@gmail.com}


\date{}
\maketitle
\begin{abstract}
In this paper, we deduce some properties of $f$-sets of connected
graphs. Also, we introduce the concept of fixing share of each
vertex of a fixing set $D$ to see the participation of each vertex
when fixing a connected graph $G$. We define a parameter, called the
fixing percentage, by using the concept of fixing share, which is
helpful in determining the measure of the amount of fixing done by
the elements of $D$ in $G$.
\end{abstract}
\section{Preliminaries}
Let $G$ be a graph with the vertex set $V(G)$ and the edge set
$E(G)$. If two vertices $u$ and $v$ share an edge, then they are
called \emph{adjacent}, otherwise they are called
\emph{non-adjacent}. The open neighborhood of a vertex $u$ is
$N(u)=\{v\in V(G):$ $v$ is adjacent to $u$ in $G\}$, and the closed
neighborhood of $u$ is $N[u] = N(u)\cup \{u\}$. For a subset $U$ of
$V(G)$, the set $N_G(U)=\{v \in V(G):v$ is adjacent to some $u\in
U\}$ is the open neighborhood of $U$ in $G$.
 Two distinct vertices $u,v$ are adjacent twins if $N[u]=N[v]$ and non-adjacent twins if $N(u)=N(v)$. A set $U\subseteq V(G)$ is called
a twin-set of $G$ if $u, v$ are twins in $G$ for every pair of
distinct vertices $u,v\in U$. The \emph{distance} $d(u,v)$ between
two vertices $u,v \in V(G)$ is the shortest length of a path between
them and the diameter $diam(G)$ of $G$ is $\max \limits_{u,v \in
V(G)}d(u,v)$. We refer to the book \cite{1} for the general graph
theoretic notation and terminology not described in this paper.

For a graph $G$, an {\it automorphism} of $G$ is a bijective mapping
$f:V(G)\rightarrow V(G)$ such that $f(u)f(v)\in E(G)$ if and only if
$uv\in E(G)$. The set of all automorphisms of $G$ forms a group,
denoted by $\Gamma(G)$, under the operation of composition of
functions. For a vertex $v$ of $G$, the set $\{f(v):f\in
\Gamma(G)\}$ is the {\it orbit} of $v$, denoted by $\mathcal{O}(v)$,
and any two vertices in the same orbit are called {\it similar
vertices}. If $u$ and $v$ are similar, then we write $u\sim v$,
otherwise we write $u\not \sim v$. Let us consider sets $S(G)=\{v\in
V(G):$ $|\mathcal{O}(v)|\ge 2 \}$ and $V_s(G)=\{(u,v):$ $u\neq v$
and $\mathcal{O}(u)=\mathcal{O}(v)\}$. Throughout the paper, the
notation $(u,v)$ represents unordered pair of vertices because the
relation `$\sim$' is symmetric.
 Also, if $G$ is a rigid graph (i.e., a graph with $\Gamma (G)={id}$), then $V_s(G)=\emptyset$. Recall that every
automorphism is also an {\it isometry},
that is, for $u,v\in V(G)$ and $g\in \Gamma(G)$, $d(u,v)=d(g(u),g(v))$.\\

 An automorphism $g\in \Gamma(G)$ is said to \emph{fix} a vertex $v\in V(G)$ if $g(v)=v$. The set of automorphisms
that fix a vertex $v\in V(G)$, called the {\it stabilizer} of $v$,
is a subgroup of $\Gamma(G)$ denoted as $\Gamma_v(G)$. An
automorphism $g\in \Gamma(G)$ is said to {\it fix} a set $D\subseteq
V(G)$ if for every $v\in D$, we have $g(v)=v$. The set of
automorphisms that fix $D$ is a subgroup $\Gamma_D(G)$ of
$\Gamma(G)$ and $\Gamma_D(G)=\cap_{v\in D}\Gamma_v(G)$. If $D$ is a
set of vertices for which $\Gamma_D(G)=\{id\}$, then $D$ {\it fixes}
the graph $G$ and we say that $D$ is a {\it fixing set} of $G$.
Erwin and Harary introduced the {\it fixing number}, $fix(G)$, of a
graph $G$ in \cite{erw} and it is defined as the minimum cardinality
of a set of vertices that fixes $G$. A fixing set containing
$fix(G)$ vertices is called a \emph{minimum fixing set} of $G$.

A vertex $x\in V(G)$ is called a \emph{fixed} vertex if $g(x)=x$ for
all $g\in \Gamma(G)$, i.e., $\Gamma_x(G)=\Gamma(G)$. A vertex $x\in
V(G)$ is said to \emph{fix a pair} $(u,v)\in V_s(G)$, if $h(u)\neq
v$ or $h(v)\neq u$ whenever $h\in \Gamma_x(G)$. Note that fixing a
pair $(u,v)\not \in V_s(G)$ has no sense. Let $(u,v)\in V_s(G)$ and
the set $fix(u,v)=\{x\in V(G):$ $x$ fixes $(u,v)\}$ is called the
\emph{fixing set} (or \emph{$f$-set}) relative to the pair $(u,v)$.
It is also further assumed that if $(u,v)\not\in V_s(G)$, then
$fix(u,v)=\emptyset$. Thus, $\{u,v\}\subseteq fix(u,v)\subseteq
V(G)$. Let $x\in V(G)$ and the set $F(x)=\{(u,v)\in V_s(G) : $ $x$
fixes $(u,v) \}$ is called the {\it fixed neighborhood} of $x$.
Also, if $x\in V(G)$ is a fixed vertex, then $F(x)=\emptyset$. The
{\it fixing graph} $F(G)$ is a bipartite graph with bipartition
$(S(G),V_s(G))$ and a vertex $x\in S(G)$ is adjacent to a pair
$(u,v)\in V_s(G)$ if $x$ fixes $(u,v)$. Observe that
$N_{F(G)}(D)=\{(u,v)\in V_s(G):$ $x$ fixes $(u,v)$ for some $x\in
D\}$ for any set $D\subseteq S(G)$. Hence, the fixing number of $G$
is the minimum cardinality of a subset $D\subseteq S(G)$ such that
$N_{F(G)}(D)=V_s(G)$.
\par

An upper bound on $fix(G)$ was given by Erwin and Harary by using
another well-studied invariant, metric dimension, defined in the
following way. Let $W = \{v_1, v_2, \ldots, v_k\}$ be a $k$-subset
of $V(G)$ and, for each vertex $v \in V(G)$, define $r(v|W) = (d(v,
v_1), d(v, v_2), \ldots, d(v, v_k))$. A $k$-set $W$ is called a
resolving set for $G$ if for every pair $u, v$ of distinct vertices
of $G$, $r(u|W)\neq r(v|W)$. The { \it metric dimension} $dim(G)$ is
the smallest cardinality of a resolving set of $G$. A resolving set
of minimum cardinality is a \emph{metric basis} for $G$. The
following results were given in \cite{erw}.
\begin{Theorem}\cite{erw}
(i)If $W$ is a metric basis for $G$, then $\Gamma_W(G)$ is
trivial.\\
(ii) For every connected graph $G$, $fix(G)\leq dim(G)$.
\end{Theorem}

These last results establish metric dimension and fixing number are
closely related notions. Indeed, C\'{a}ceres et al. \cite{cac}, and
subsequently Garijo et at. \cite{gar}, dealt with the difference
between these parameters by studying the following question that
appeared first in \cite{bou}: Can the difference between both
parameters of a graph of order $n$ be arbitrarily large? \par
Another graph invariant related to metric dimension is the
\emph{resolving number} $res(G)$, which is the minimum $k$ such that
every $k$-set of vertices is a resolving set of a graph $G$. Observe
that this parameter is a natural upper bound on the metric dimension
of $G$: $dim(G)\leq res(G)$.

\par In the next section, we study some
properties of $f$-sets following the study of $R$-sets by Tomescu
and Imran \cite{tom}. To see the contribution of each vertex when
resolving a graph, the concepts of resolving share and resolving
percentage were introduced in \cite{jav}. In the second section, we
introduce the concept of {\it fixing share}, which tells about the
participation of each vertex of a fixing set when fixing a graph
$G$. We also define the {\it fixing percentage} in $G$, by using the
concept of fixing share of each element of a fixing set of $G$,
which is the measure of the amount of fixing done by a fixing set in
$G$. Then we compute the fixing share and the fixing percentage in
paths and cycles.

\section{Properties of $f$-sets}
\begin{Proposition}\label{l1}
If there exists an automorphism $g\in \Gamma(G)$ such that $g(u)=v,
u\neq v$ and if $d(u,x)= d(v,x)$ for some $x\in V(G)$, then $x\notin
fix(u,v)$.
\end{Proposition}
\begin{Proposition}\label{l2}
Let $G$ be a cycle of order $n$ and let $(u,v)\in V_s(G)$. \\
$(i)$ If $n$ is even and $d(u,v)$ is odd, then $fix(u,v)=V(G)$.\\
$(ii)$ If $n$ is even and $d(u,v)$ is even, then
$fix(u,v)=V(G)\setminus \{x_1,x_2\}$ where  $x_1,x_2 \in V(G)$ are
the only antipodal vertices with $d(x_i,u)= d(x_i,v)$, $1\leq i\leq 2$.\\
$(iii)$ If $n$ is odd and $x\in V(G)$ is the vertex with
$d(x,u)=d(x,v)$, then $fix(u,v)=V(G)\setminus \{x\}$.
\end{Proposition}
\begin{Proposition}\label{p1}
Let $G$ be a path of order $n$ and $V(G)=\{u_1,...,u_n\}$ where
$u_i$ is adjacent to $u_{i+1}$ with $(1\leq i\leq n-1)\}$, then
$V_s(G)=\{(u_i,u_{n+1-i}):1\leq i\leq \lfloor \frac{n}{2}\rfloor\}$ and \\
(i) If $n$ is even, then $fix(u_i,u_{n+1-i})=V(G)$.\\
(ii) If $n$ is odd, then $fix(u_i,u_{n+1-i})=V(G)\setminus
\{u_{\frac{n+1}{2}}\}$.
\end{Proposition}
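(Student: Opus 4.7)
The plan is to reduce everything to a description of $\Gamma(G)$ for the path $P_n$. It is standard that $\Gamma(P_n)$ has exactly two elements: the identity $id$ and the \emph{flip} $\sigma$ defined by $\sigma(u_i)=u_{n+1-i}$. This is because any automorphism of a path must send a vertex of degree $1$ to a vertex of degree $1$, and once the image of $u_1$ is fixed, the rest is forced by adjacency. Computing orbits from this immediately yields $\mathcal{O}(u_i)=\{u_i,u_{n+1-i}\}$, which has size $2$ unless $n$ is odd and $i=\tfrac{n+1}{2}$ (in which case $u_i$ is a fixed vertex). Hence $V_s(G)=\{(u_i,u_{n+1-i}):1\le i\le\lfloor n/2\rfloor\}$, as claimed.

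For part (i), I would argue that when $n$ is even the flip $\sigma$ has no fixed point, so for every $x\in V(G)$ we have $\sigma(x)\neq x$ and therefore $\Gamma_x(G)=\{id\}$. Consequently, for any pair $(u_i,u_{n+1-i})\in V_s(G)$ and any $x\in V(G)$, the only $h\in\Gamma_x(G)$ is the identity, which sends $u_i\mapsto u_i\neq u_{n+1-i}$. Thus every vertex lies in $fix(u_i,u_{n+1-i})$, giving equality with $V(G)$.

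For part (ii), when $n$ is odd the unique fixed point of $\sigma$ is $m:=u_{(n+1)/2}$, and $d(m,u_i)=d(m,u_{n+1-i})$ for each pair in $V_s(G)$. Lemma~\ref{l1} then applies with $g=\sigma$ and $x=m$, so $m\notin fix(u_i,u_{n+1-i})$. For any other vertex $u_j$ with $j\neq(n+1)/2$, we have $\sigma(u_j)=u_{n+1-j}\neq u_j$, hence $\Gamma_{u_j}(G)=\{id\}$ and exactly as in part (i) the vertex $u_j$ fixes every pair in $V_s(G)$. Combining these two observations gives $fix(u_i,u_{n+1-i})=V(G)\setminus\{m\}$.

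There is no serious obstacle here; the only delicate point is justifying that the middle vertex really must be excluded in case (ii), which is precisely what Lemma~\ref{l1} is designed to do. Everything else is a direct consequence of the fact that $|\Gamma(P_n)|=2$ and that a vertex $x$ has nontrivial stabilizer only when $\sigma(x)=x$.
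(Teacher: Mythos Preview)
Your argument is correct. The paper actually states this proposition without proof, presumably regarding it (like the preceding corollary on cycles) as an immediate consequence of Lemma~2.1 together with the standard description $\Gamma(P_n)=\{id,\sigma\}$; your write-up makes exactly that implicit reasoning explicit.
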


Two vertices $u$ and $v$ in a graph $G$ are said to be \emph{twin}
if $d(u,w)=d(v,w)$ for all $w\in V(G)\backslash \{u,v\}$.
\begin{Proposition}\label{l1}
$fix(u,v)=\{u,v\}$ if and only if $u$ and $v$ are twin vertices.
\end{Proposition}
\begin{proof}
Let $fix(u,v)=\{u,v\}$. Now, if $u$ and $v$ are not twin, then there
exists a vertex $w\in V(G)$ such that $d(u,w)\neq d(v,w)$. If we fix
$w$ by an automorphism $f\in \Gamma_w(G)$, then $f(u)=v$ implies
$d(w,f(u))=d(w,v)$. Now $w=f(w)$ implies $d(f(w),f(u))=d(w,v)$ and
fact that $f$ is an isometry implies $d(w,u)=d(w,v)$, a
contradiction. Thus $u,v$ are twin.
\par
 Conversely, let $u$ and $v$ be two twin vertices and
$\{u,v\}\subset fix(u,v)$, then there exists at least one vertex
$w(\neq u,v)\in V(G)$ such that $f(u)\neq v$ and $f(v)\neq u$ for
all $f\in \Gamma_w(G)$. Since $f$ is an isometry and $f(u)\neq v$,
so $d(u,w)= d(f(u),f(w))\neq d(v,w)$, a contradiction that $u$ and
$v$ are twin.
\end{proof} In a complete graph, every pair of vertices are twin.
Therefore, we have the following corollary:
\begin{Corollary}\label{l2}
Let $G$ be a complete graph of order $n$ and $u$ and $v$ be a pair
of distinct vertices, then $(u,v)\in V_s(G)$ and $fix(u,v)=\{u,v\}$.
\end{Corollary}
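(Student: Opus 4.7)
The plan is to leverage the immediately preceding Theorem, which characterizes the equality $fix(u,v)=\{u,v\}$ in terms of distance similarity of $u$ and $v$. So the proof splits into two tasks: first, verify that $(u,v)\in V_s(G)$, and second, verify that $u$ and $v$ are distance similar in $K_n$.

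For the first task, I would note that the transposition $\tau$ of $V(G)$ that swaps $u$ with $v$ and fixes every other vertex is an automorphism of $K_n$: since $K_n$ has all possible edges, the adjacency relation is trivially preserved by every permutation of $V(G)$. In particular $\tau \in \Gamma(G)$ and $\tau(u)=v$, so $u\sim v$ and hence $(u,v)\in V_s(G)$.

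For the second task, I would observe that for every $w\in V(G)\setminus\{u,v\}$ we have $d(u,w)=1=d(v,w)$, simply because every two distinct vertices of $K_n$ are adjacent. Thus $u$ and $v$ are distance similar by definition.

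With both ingredients in place, Theorem~\ref{l1} (the distance-similarity characterization of $fix(u,v)=\{u,v\}$) immediately yields $fix(u,v)=\{u,v\}$, and we are done. There is no real obstacle here; this corollary is essentially a one-line consequence of the previous theorem together with the trivial geometry of $K_n$, and the only mild care needed is to confirm that the pair actually lies in $V_s(G)$ so that Theorem~\ref{l1} applies non-vacuously.
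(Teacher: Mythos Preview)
Your proposal is correct and follows exactly the route the paper intends: the paper simply notes, immediately before the corollary, that every pair of vertices in a complete graph is distance similar, so the result is an immediate application of the preceding Theorem~\ref{l1}. Your write-up is in fact slightly more complete, since you also explicitly verify that $(u,v)\in V_s(G)$ via the transposition automorphism, a point the paper leaves implicit.
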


\begin{Corollary}\label{l2}
Let $G=K_{m,n}$ be a complete bipartite graph, \\
(i) $fix(u,v)=\{u,v\}$, if both $u$ and $v$ are in same partite sets.\\
(ii) $fix(u,v)=V(G)$, if $u$ and $v$ are not in same partite sets and $m=n$.\\
(iii) $fix(u,v)=\emptyset$, if both $u$ and $v$ are not in same
partite set and $m\neq n.$
\end{Corollary}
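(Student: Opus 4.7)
The strategy is to split by the location of $u$ and $v$ in the bipartition $V(G)=A\cup B$ with $|A|=m$, $|B|=n$, and invoke the preceding results of the section.

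Case (i) is a direct appeal to the distance-similarity theorem above. If $u,v$ both lie in one part, say $A$, then for every $w\in A\setminus\{u,v\}$ we have $d(u,w)=d(v,w)=2$ (through any common neighbor in $B$), and for every $w\in B$ we have $d(u,w)=d(v,w)=1$. Thus $u,v$ are distance similar and the theorem yields $fix(u,v)=\{u,v\}$.

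Case (iii) is dispatched by a degree argument. If $u\in A$, $v\in B$ and $m\neq n$, then $\deg(u)=n\neq m=\deg(v)$; since automorphisms preserve degree, no $g\in\Gamma(G)$ satisfies $g(u)=v$, whence $u\not\sim v$ and $(u,v)\notin V_s(G)$. By the convention that $fix(u,v)=\emptyset$ whenever $(u,v)\notin V_s(G)$, the claim follows.

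The substantive case is (ii). Take $m=n$, $u\in A$, $v\in B$, and an arbitrary $x\in V(G)$; I would show $x\in fix(u,v)$ by verifying that every $g\in\Gamma_x(G)$ satisfies $g(u)\neq v$ and $g(v)\neq u$. The key step to invoke is that any automorphism of $K_{m,m}$ either preserves the bipartition or swaps the two parts globally. Since $g$ fixes $x$ and $x$ lies in one part, the first alternative must hold, so $g(A)=A$ and $g(B)=B$; then $g(u)\in A$, hence $g(u)\neq v$, and $g(v)\in B$, hence $g(v)\neq u$. Running this argument for every $x$ produces $fix(u,v)=V(G)$.

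The main obstacle, modest though it is, is justifying the bipartition-preservation statement for $K_{m,m}$. This is standard for any connected bipartite graph and follows from uniqueness of the proper $2$-coloring up to swap of color classes (or, more elementarily, because a common neighborhood of size $m$ characterizes membership in the opposite part). Once this is in hand, each of the three cases falls out immediately from the material already developed in the section.
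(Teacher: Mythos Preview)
Your argument is correct in all three cases. The paper itself offers no proof for this corollary; it is simply listed after Theorem~2.4 and Corollary~2.5 as an immediate consequence of the distance-similarity characterization and the standing convention that $fix(u,v)=\emptyset$ when $(u,v)\notin V_s(G)$. Your handling of cases~(i) and~(iii) is exactly what the paper has in mind implicitly, and your case~(ii) supplies the one genuine detail the paper suppresses --- namely that an automorphism of $K_{m,m}$ fixing a vertex must preserve the bipartition, so it can never interchange $u$ and $v$.
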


\section{The fixing share in graphs}
\begin{Remark}\label{c01}
If $u$ and $v$ are twins in a connected graph $G$ and $D$ is a
fixing set for $G$, then $u$ or $v$ is in $D$. Moreover, if $u\in D$
and $v\notin D$, then $(D\setminus \{u\})\cup \{v\}$ is also a
fixing set for $G$.
\end{Remark}
\begin{Remark}\label{r01}
If $U$ is a twin-set in a connected graph $G$ of order $n$ with
$|U|= m\geq 2$, then every fixing set for $G$ contains at least
$m-1$ vertices from $U$.
\end{Remark}
\begin{Definition}\label{d31}
(Sole fixer) Let $G$ be a connected graph and $D$ be a minimum
fixing set of $G$. Let $(u,v)\in V_s(G)$. If $fix(u,v)\cap D=\{x\}$,
then $x$ is called sole fixer for the pair $(u,v)$.
\end{Definition}
\begin{figure}[h]
      {\includegraphics[width=8cm]{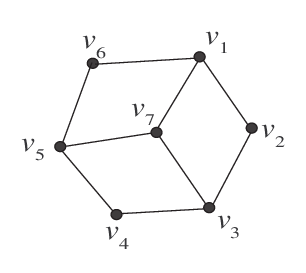}}
        \caption{The graph $G_1$}\label{fig2}
\end{figure}
Consider the graph $G_1$ in \textsc{Figure} 1 with vertex set
$V(G_1)=\{v_1,v_2,v_3,v_4,v_5,v_6,v_7\}$ and
$V_s(G_1)=\{(v_1,v_3),(v_1,v_5),(v_3,v_5),(v_2,v_4),(v_2,v_6),(v_4,v_6)\}$.
A fixing set of $G_1$ with minimum cardinality is $D_1=\{v_1,v_3\}$.
Since $fix(v_2,v_4)=\{v_1,v_5\}$ and $fix(v_2,v_6)=\{v_3,v_5\}$, so
$v_1$ is the sole fixer of the pair $(v_2,v_4)$ and $v_3$ is the
sole fixer of the pair $(v_2,v_6)$ with respect to
$D_1=\{v_1,v_3\}$.

\begin{Definition}\label{r02}
Let $G$ be a connected graph and $D$ be a minimum fixing set of $G$.
Let $(u,v)\in V_s(G)$ and $x\in D$ be the sole fixer of $(u,v)$ with
respect to $D$. Then, $x$ is considered to has 1-share in fixing
pair $(u,v)$. Further, if $(u,v)$ is also fixed by, say $m<|D|$
other vertices of $D\setminus \{x\}$, then $x$ is considered to have
$\frac{1}{m+1}$ share in fixing the pair $(u,v)$.
\end{Definition}
For example in $G_1$, the vertex $v_1$ is considered to have 1-share
in fixing the pair $(v_2,v_4)$ with respect to $D_1$. However,
vertex $v_1$ is considered to have $\frac{1}{2}$ -share in fixing
the pair $(v_4,v_6)$, as $fix(v_4,v_6)\cap D_1=\{x_1,x_3\}$.

If $D$ is a fixing set with minimum cardinality for a connected
graph $G$ and $x\in D$, then the fixing share of $x$ in $D$ is
defined as a measure of the amount of fixing done by $x$ in $G$.
Formally, we have the following definition:
\begin{Definition}\label{d32}
(fixing share) Let $G$ be a connected graph and let $D$ be a minimum
fixing set of $G$. For any $x\in D$ and $(u,v)\in F(x)$, we define a
set $\mathcal{F}(u,v)=\{F(y): y\in D$ and $(u,v)\in F(y)\}$. The
fixing share of a vertex $x\in D$ in $D$ is defined as
\[f(x;D)=\sum \limits_{(u,v)\in F(x)}\frac{1}{|\mathcal{F}(u,v)|}\]

\end{Definition}
In the example of Figure \ref{fig2}, we have
$F(v_1)=\{(v_1,v_5),(v_1,v_3),(v_2,v_4),(v_4,v_6)\}$ and
$F(v_3)=\{(v_1,v_3),(v_2,v_6),(v_3,v_5),(v_4,v_6)\}$. Also
$\mathcal{F}(v_1,v_5)=\{F(v_1)\}$,
$\mathcal{F}(v_1,v_3)=\{F(v_1),F(v_3)\}$,
$\mathcal{F}(v_2,v_4)=\{F(v_1)\}$,
$\mathcal{F}(v_4,v_6)=\{F(v_1),F(v_3)\}$. Thus, $f(v_1;D_1)=
1+\frac{1}{2}+1+\frac{1}{2}=3$ and similarly $f(v_3;D_1)=
\frac{1}{2}+1+1+\frac{1}{2}=3$.
\begin{Definition}\label{d33}
(fixing sum and percentage) Let $D$ be a minimum fixing set for a
connected graph $G$. The fixing sum and the fixing percentage of $G$
with respect to $D$ are defined as $F_{sum}(D)= \sum\limits_{x\in
D}f(x;D)$ and $F\%(D)=\frac{|D|}{F_{sum}(D)}$ respectively.
\end{Definition}
For the graph $G_1$ of Figure \ref{fig2}, $F_{sum}(D_1)=6$ and
$F\%(D_1)=\frac{2}{6}=\frac{1}{3}$.
\begin{Theorem}\label{t32}
Let $G$ be a complete graph of order $n\ge 3$ and let $D$ be a
minimum fixing set of $G$. For every $v\in D$, it holds that
$f(v;D)=\frac{n}{2}$. Furthermore, $F_{sum}(D)=(n-1)\frac {n}{2}$
and $F\%(D)=\frac{2}{n}$.
\end{Theorem}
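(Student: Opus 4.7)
The plan is to use the structure of $K_n$: all vertices are mutually twins, all pairs are in $V_s(G)$ (they lie in a single orbit), and by Corollary~1.6 each $f$-set is as small as possible, namely $fix(u,v)=\{u,v\}$. Once these facts are in place the computation is a matter of bookkeeping.

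First I would fix the minimum fixing set. Since every two vertices $u,v$ in $K_n$ satisfy $N[u]=N[v]$, the whole vertex set is a twin-set of size $n$, so by Remark~3.2 every fixing set contains at least $n-1$ vertices. Conversely, any set of $n-1$ vertices fixes $K_n$ (the single remaining vertex has no non-trivial image available). Hence $|D|=n-1$; let $w$ be the unique vertex of $V(G)\setminus D$.

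Next I would describe $F(v)$ and $\mathcal{F}(u,v)$ explicitly. Because $K_n$ has a single orbit, $V_s(G)$ consists of all unordered pairs of distinct vertices, and Corollary~2.4 gives $fix(u,v)=\{u,v\}$ for every such pair. Thus for $v\in D$,
\[
F(v)=\{(v,y) : y\in V(G)\setminus\{v\}\},
\]
a set of size $n-1$. Moreover, for a pair $(v,y)\in F(v)$, the set $\mathcal{F}(v,y)$ collects those $F(z)$ with $z\in D\cap fix(v,y)=D\cap\{v,y\}$. Hence $|\mathcal{F}(v,y)|=2$ when $y\in D$ and $|\mathcal{F}(v,y)|=1$ when $y=w$.

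Finally I would add up the contributions. Among the $n-1$ pairs $(v,y)$ in $F(v)$, exactly one has $y=w$ (contributing $1$) and the remaining $n-2$ have $y\in D\setminus\{v\}$ (each contributing $\tfrac12$), so
\[
f(v;D)=(n-2)\cdot\tfrac12+1=\tfrac{n}{2}.
\]
Summing over the $n-1$ vertices of $D$ gives $F_{sum}(G)=(n-1)\tfrac{n}{2}$, and then $F\%(G)=|D|/F_{sum}(G)=2/n$. There is no real obstacle here; the only point to be careful about is observing that pairs are unordered, so that $v$'s contributions are counted once per unordered pair, which is exactly how $F(v)$ is defined in the excerpt's example computation for $G_1$.
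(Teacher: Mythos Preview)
Your argument is correct and is essentially the paper's own proof: fix $D$ of size $n-1$ with missing vertex $w$, observe $F(v)=\{(v,y):y\neq v\}$ because $fix(u,v)=\{u,v\}$ in $K_n$, split the $n-1$ pairs in $F(v)$ into the $n-2$ with $y\in D$ (each giving $\tfrac12$) and the single pair $(v,w)$ (giving $1$), and sum. The only cosmetic slip is your cross-reference labels (``Corollary~1.6'' and ``Corollary~2.4''), which do not match the paper's numbering for the relevant results.
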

\begin{proof}
Let $V(G)=\{v_1,...,v_n\}$. Since $G$ is a complete graph, so
$V_s(G)=\{(v_i,v_j):i\neq j$ and $1\leq i,j\leq n\}$. Also, the
cardinality of a minimum fixing set of $G$ is $n-1$. Let
$D=\{v_1,v_2,...,v_{n-1}\}\subset V(G)$ be such a minimum set (note
that $D$ is a twin-set in $G$). Then, for each $i$ with $1\leq i
\leq n-1$, we have that $F(v_i)=\{(v_i,v_j): j\neq i$ and $1\le j\le
n\}$. It can be seen that for each $j$ with $j\neq i$ and $1\leq
j\leq n-1$, we obtain $|\mathcal{F}(v_i,v_j)|=2$ because $(v_i,v_j)$
appears in exactly two $F(v_i)$, being $i$ such that $1\leq i\leq
n-1$. Also, $|\mathcal{F}(v_i,v_n)|=1$ since $(v_i,v_n)$ appears in
exactly one $F(v_i)$, being $i$ such that $1\leq i \leq n-1$.
Therefore, $f(v_i;D)=(n-2)\frac {1}{2}+1=\frac {n}{2}$, and
consequently $F_{sum}(D)=(n-1)\frac {n}{2}$ and
$F\%(D)=\frac{2}{n}$.
\end{proof}

\begin{Theorem}\label{t33}
Let $G$ be a path of order $n\geq2$ and let $D=\{v\}$ be a minimum
fixing set of $G$. Then, $f(v;D)=\lfloor
\frac{n}{2}\rfloor=F_{sum}(D)$ and $F\%(D)=\frac{1}{\lfloor
\frac{n}{2}\rfloor}$.
\end{Theorem}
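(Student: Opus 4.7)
The plan is to leverage Proposition \ref{p1}, which already gives an explicit description of $V_s(G)$ and of $fix(u_i,u_{n+1-i})$ for a path, and then simply unwind the definitions of fixing share, fixing sum and fixing percentage for a singleton fixing set.

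First I would note that since $|D|=1$, for every pair $(u,v)\in V_s(G)$ the set $\mathcal{F}(u,v)$ defined in Definition \ref{d32} is either empty or consists of the single element $F(v)$, so whenever $(u,v)\in F(v)$ we have $|\mathcal{F}(u,v)|=1$. Hence the fixing share collapses to
\[
f(v;D)=\sum_{(u,w)\in F(v)}\frac{1}{|\mathcal{F}(u,w)|}=|F(v)|.
\]
Thus the problem reduces to computing $|F(v)|$, i.e.\ counting how many pairs of $V_s(G)$ the vertex $v$ actually fixes.

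Next I would invoke Proposition \ref{p1}: for the path $P_n$ with $V(G)=\{u_1,\dots,u_n\}$ one has $V_s(G)=\{(u_i,u_{n+1-i}):1\le i\le\lfloor n/2\rfloor\}$, so $|V_s(G)|=\lfloor n/2\rfloor$. The same proposition tells us that $fix(u_i,u_{n+1-i})=V(G)$ if $n$ is even, while $fix(u_i,u_{n+1-i})=V(G)\setminus\{u_{(n+1)/2}\}$ if $n$ is odd. Since $D=\{v\}$ is assumed to be a fixing set, $v$ must lie in $fix(u_i,u_{n+1-i})$ for every $i$; in the odd case this forces $v\ne u_{(n+1)/2}$ (which is anyway consistent because the middle vertex is a fixed vertex of $P_n$ and cannot belong to a fixing set). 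In either parity this gives $F(v)=V_s(G)$, and therefore $|F(v)|=\lfloor n/2\rfloor$.

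Combining the two observations yields $f(v;D)=\lfloor n/2\rfloor$. Since $D$ has exactly one element, Definition \ref{d33} then gives $F_{sum}(G)=f(v;D)=\lfloor n/2\rfloor$, and $F\%(G)=|D|/F_{sum}(G)=1/\lfloor n/2\rfloor$, as required. There is no real obstacle here; the only small subtlety is to check, for odd $n$, that a singleton fixing set cannot be the central vertex, so that Proposition \ref{p1} indeed places $v$ in every $fix(u_i,u_{n+1-i})$ and the equality $F(v)=V_s(G)$ is justified uniformly.
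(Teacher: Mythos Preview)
Your proof is correct and follows essentially the same approach as the paper: both arguments observe that a singleton fixing set makes $|\mathcal{F}(u,w)|=1$ for every pair in $F(v)$, so that $f(v;D)=|F(v)|=|V_s(G)|=\lfloor n/2\rfloor$. Your version is in fact slightly more careful, since you invoke Proposition~\ref{p1} explicitly and treat a general singleton fixing set $\{v\}$ (checking that for odd $n$ one must have $v\neq u_{(n+1)/2}$), whereas the paper simply takes $v$ to be an end vertex.
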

\begin{proof}
As $fix(G)=1$ and one of the end vertices of $G$ forms a fixing set,
say $D=\{v\}$. Also $|V_s(G)|=\lfloor \frac{n}{2}\rfloor$ and $v$ is
the sole fixer of the $\lfloor \frac{n}{2}\rfloor$ pairs of
$V_s(G)$. Thus, $|F(v)|=\lfloor \frac{n}{2}\rfloor$ and
$|\mathcal{F}(u,v)|=1$ for all $(u,v)\in F(v)$. Hence,
$f(v;D)=\lfloor \frac{n}{2}\rfloor=F_{sum}(D)$ and
$F\%(D)=\frac{1}{\lfloor \frac{n}{2}\rfloor}$ for all $n\geq2$.
\end{proof}
\begin{Remark}
Since, a path $P_n$ is a graph with fixing number $1$ and a complete
graph $K_n$ is the graph with fixing number $n-1$, hence we can
deduce that for a connected graph $G$ of order $n\geq2$, $1\leq
F_{sum}(D)\leq {n \choose 2}$ and $\frac{2}{n^2-n}\leq F\%(D)\leq
\frac{2}{n}$.

\end{Remark}
Two vertices $u$ and $v$ in a connected graph $G$ of order $n$ are
said to be \emph{antipodal} vertices if $d(u,v)=\frac{n}{2}$.
\begin{Theorem}\label{t34}
Let $G$ be a cycle graph of order $n\geq4$ and let $D$ be a minimum
fixing set of $G$. For each $v\in D$, the fixing share
$f(v;D)=\frac{1}{2}{n \choose 2}$. Moreover, $F_{sum}(D)={n \choose
2}$ and $F\%(D)=\frac{4}{n^2-n}$.
\end{Theorem}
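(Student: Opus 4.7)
The plan is to exploit the dihedral symmetry of $C_n$ together with Corollary~2.2. First I would recall that $fix(C_n)=2$ for $n\geq 3$, so any minimum fixing set takes the form $D=\{a,b\}$ with $a\neq b$; and that because $C_n$ is vertex-transitive, $V_s(G)$ coincides with the set of all $\binom{n}{2}$ unordered pairs of distinct vertices. Write $A:=V_s(G)\setminus F(a)$ and $B:=V_s(G)\setminus F(b)$ for the ``bad'' pairs, i.e.\ those not fixed by $a$ (respectively by $b$). Corollary~2.2 then says that $a$ is a bad vertex for exactly $(n-1)/2$ pairs when $n$ is odd (case (iii)) and for exactly $(n-2)/2$ pairs when $n$ is even (only case (ii) contributes, since in case (i) every vertex is a fixer). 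Let $k$ denote this common value $|A|=|B|$.

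The main step, and the one I expect to be the crux, is to prove $A\cap B=\emptyset$. When $n$ is odd, case (iii) tells us that each bad pair has a \emph{unique} bad vertex, so it cannot be bad for both $a$ and $b$; hence $A\cap B=\emptyset$ is immediate. When $n$ is even, case (ii) says that the two bad vertices of a pair at even distance are themselves an antipodal pair of $C_n$. Thus $(u,w)\in A\cap B$ would force $\{a,b\}$ to be that antipodal pair; but then the reflection through this axis fixes both $a$ and $b$, placing a non-identity automorphism in $\Gamma_a(G)\cap\Gamma_b(G)=\Gamma_D(G)$, contradicting the assumption that $D$ is a fixing set. So $A\cap B=\emptyset$ in either parity.

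Finally I would assemble the count. With $A\cap B=\emptyset$ and $|A|=|B|=k$, the $\binom{n}{2}$ pairs of $V_s(G)$ split into $\binom{n}{2}-2k$ pairs fixed by both elements of $D$ (each contributing $1/2$ to $f(a;D)$, since then $|\mathcal{F}(u,w)|=2$) and $k$ pairs fixed only by $a$ (each contributing $1$), so
\[
f(a;D)=\frac{1}{2}\Bigl(\binom{n}{2}-2k\Bigr)+k=\frac{1}{2}\binom{n}{2},
\]
independently of the parity of $n$. Summing over the two elements of $D$ yields $F_{sum}(G)=\binom{n}{2}$, whence $F\%(G)=|D|/F_{sum}(G)=2/\binom{n}{2}=4/(n^{2}-n)$, as claimed.
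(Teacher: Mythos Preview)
Your argument is correct and follows essentially the same route as the paper: both proofs use Corollary~2.2 to count, for each element of $D$, the pairs it fails to fix (your $k=(n-2)/2$ or $(n-1)/2$), observe that these ``sole-fixer'' families for the two elements of $D$ are disjoint, and then combine the $k$ weight-$1$ contributions with the $\binom{n}{2}-2k$ weight-$\tfrac12$ contributions to obtain $\tfrac12\binom{n}{2}$. Your version is in fact slightly more complete, since you explicitly argue (via the reflection through an antipodal axis) that a minimum fixing set of an even cycle cannot be an antipodal pair, whereas the paper simply stipulates that $D$ consists of non-antipodal vertices.
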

\begin{proof}
Let $D=\{u,v\}$ be a minimum fixing set of $G$ consisting of two
non-antipodal vertices $u$ and $v$, and observe that $|V_s(G)|={n
\choose 2}$. We distinguish two cases.
\begin{enumerate}
\item Case (when $n$ is even):
We notice that $|F(v)|={n\choose 2}-\frac{n-2}{2}$. There are two
types of pairs $(x,y)$ in $F(v)$. Indeed, $(i)$ there are $(\frac
{n-2}{2})$ pairs $(x,y)$ in $F(v)$ such that $|\mathcal{F}(x,y)|=1$
(as $v$ is the sole fixer for $(\frac {n-2}{2})$ pairs in $F(v)$),
and $(ii)$ there are ${n\choose 2}-(n-2)$ remaining pairs $(x,y)$ in
$F(v)$ such that $|\mathcal{F}(x,y)|=2$ (as $u$ and $v$ equally
participate to fix ${n\choose 2}-(n-2)$ remaining pairs in $F(v)$).
Thus, $f(v;D)=\frac{n-2}{2}+\frac{1}{2}[{n \choose
2}-(n-2)]=\frac{1}{2}{n\choose 2}$ for each $v\in D$.

\item Case (when $n$ is odd):
We notice that $|F(v)|={n\choose 2}-\frac{n-1}{2}$. There are two
types of pairs $(x,y)$ in $F(v)$. Indeed, $(i)$ there are $(\frac
{n-1}{2})$ pairs $(x,y)$ in $F(v)$ such that $|\mathcal{F}(x,y)|=1$
(as $v$ is the sole fixer for $(\frac {n-1}{2})$ pairs in $F(v)$),
and $(ii)$ there are ${n\choose 2}-(n-1)$ remaining pairs $(x,y)$ in
$F(v)$ such that $|\mathcal{F}(x,y)|=2$ (as $u$ and $v$ equally
participate to fix the remaining ${n\choose 2}-(n-1)$ pairs in
$F(v)$). Thus, $f(v;D)=\frac{n-1}{2}+\frac{1}{2}[{n \choose
2}-(n-1)]=\frac{1}{2}{n\choose 2}$ for each $v\in D$.
\end{enumerate}

Hence, $F_{sum}(D)={n \choose 2}$ and hence
$F\%(D)=\frac{4}{n^2-n}$.
\end{proof}

\section{Summary}
 In this paper, we have first described some properties of
$fix(u,v)$. Then, we have found bound on the cardinality of the edge
set of the fixing graph of a graph $G$. Also, we have defined fixing
share of a vertex in a fixing set $D$ of a graph $G$ and studied it
for vertices in fixing sets of some common classes of graphs.


\begin{thebibliography}{9}

\bibitem {bou}
D. L. Boutin, Identifying graph automorphism using determining set,
\textit{Electron. J. Combin.,} 13(1), Research Paper 78(electronic),
2006.

\bibitem {bou1}
D. L. Boutin, The determining number of cartesian product,
\textit{J. Graph Theory,} 61(2), 77-87, 2009.

\bibitem{cac}
J. C\'{a}ceres, D. Garijo, L. Puertas, C. Seara, On the determining
number and the metric dimension of graphs,  \textit{Electron. J.
Combin.,} 17, 2010.
\bibitem{GCPZ}
G. Chartrand, C. Poisson, P. Zhang, Resolvability and the upper
dimension of graphs, \textit{Computers and Maths. with Appl.} 39,
19-28, 2000.
\bibitem{1}
G. Chartrand and L. Lesniak, Graphs and Digraphs, $3$rd ed.,
\textit{Chapman and Hall, London,} 1996.
\bibitem {erw}
D. Erwin, F. Harary, Destroying automorphism by fixing nodes,
\textit{Disc. Math.,} 306, 3244-3252, 2006.
\bibitem{feh}
M. Fehr, S. Gosselin, O. R. Oellermann, The metric dimension of
Cayley digraphs, \textit{Disc. Math.,} 306, 31-41, 2006.
\bibitem{gar}
D. Garijo, A. Gonz\'{a}lez and A. M\'{a}rquez. The difference
between the metric dimension and the determining of a graph,
\emph{Applied Mathematics and Computation} 249, 487-501, 2014.


\bibitem{gib}
C. R. Gibbons, J. D. Laison, Fixing Numbers of Graphs and Groups,
\textit{Electron. J. Combin.,} 16 Research Paper 39, 2009.
\bibitem{haray}
 F. Haray, Methods of destorying the symmetries of a
graph, \textit{Bull. Malaysian Math. Sci. Soc.,} 24(2), 2001.

\bibitem {har}
F. Harary, R. A. Melter, On the metric dimension of a graph,
\textit{Ars Combin.,} 2, 191-195, 1976.
\bibitem {ijav}
I. Javaid, M. Murtaza, M. Asif, F. Iftikhar, On the fixed number of
graphs, \textit{Bull. Iran. Math. Soc.}, 2017, url="http://bims.iranjournals.ir/article\_1103.html".

\bibitem {jav}
I. Javaid, M. Salman, M. A. Chaudhary, The resolving share in
graphs, \textit{arXiv preprint arXiv:1408.0132.}

\bibitem{tom}
I. Tomescu, M. Imran, Metric dimension and $R$-Sets of connected
graphs, \textit{Graphs Combin.} 27(4), 585-591, 2011.
\end{thebibliography}
\end{document}